\newtheorem{theorem}{Theorem}[section]\newtheorem{thm}[theorem]{Theorem}
\newtheorem*{theorem*}{Theorem}
\newtheorem{lemma}{Lemma}[section]
\newtheorem{corollary}[theorem]{Corollary}
\newtheorem{remark}[theorem]{Remark}
\def \b {\beta}
\def\a{\alpha}
\def\ol{\overline}
\def\ul{\underline}
\def\g{\gamma}
\def\b{\beta}
\def\p{\partial}
\def\l{\lambda}
\def\R{\mathbb R}
\def\tu{\tilde{u}}
\def\vp{\varphi}
\numberwithin{equation}{section}
\begin{document}
\title[]{\bf Nonparametric Hypersurfaces Moving by powers of Gauss Curvature}

\author{Xiaolong Li}
\address{Department of Mathematics, University of California, San Diego, La Jolla, CA 92093, USA}
\email{xil117@ucsd.edu}
%%    \thanks will become a 1st page footnote.
%
%
\author{Kui Wang}
%
%%    Information for second author
%
\address{School of Mathematic Sciences, Soochow University, Suzhou, 215006, China}
\email{kuiwang@suda.edu.cn}

\thanks{ The research of the first author is partially supported by an Inamori fellowship and NSF grant DMS-1401500.  }
%The research of the  second author is partially supported by NSF grant DMS-1401500.}

\maketitle

\begin{abstract}
We study asymptotic behavior of nonparametric hypersurfaces moving by $\a $ powers of Gauss curvature with $\a >1/n$. 
Our work generalizes the results of V. Oliker \cite{O1} for $\a =1$. 
%We give sharp estimates of the rate at which the hyperfurfaces shrink to zero. 
\end{abstract}
\section{Introduction}
%Let $\psi(x)=c|x|^{\frac{2n}{n-1}}$, then it satisfies
%$$\det(\psi)=c^{n-1}\left(\frac{2n}{n-1}\right)^n \frac{n+1}{n-1} \psi.$$ 

Let $\Omega$ be a bounded strictly convex domain in $\R^n$, $n\geq 2$, with smooth boundary $\p \Omega$. 
We consider a solution of the following initial boundary problem
\begin{align}\label{PGCF}
%\begin{cases}
& u_t =\frac{[\det(u_{ij})]^{\a}}{(1+|\nabla u|^2)^{\a \b}} \text{  in  } \Omega\times(0,\infty), \nonumber \\
& u(x,t) =0 \text{  in  } \p \Omega \times (0,\infty), \\
& u(x,t) \text{  is strictly convex for each  }  t \geq 0, \nonumber 
%\end{cases}
\end{align}
where $\a >1/n$ and $\b \geq 0$ are constants and 
$$u_t :=\frac{\p u}{\p t}, \text{  } u_{ij}:=\frac{\p^2 u}{\p x_i \p x_j}, 
\text{  } \nabla u :=\left(\frac{\p u}{\p x_1}, \cdots , \frac{\p u}{\p x_n}\right).$$

Equation \eqref{PGCF} describes the graphs 
$(x, u(x,t)), (x,t) \in \ol{\Omega} \times [0,\infty) $
evolving in $\R^{n+1}$ with relative boundaries $\left.(x,u(x,t)) \right|_{\p \Omega}$ remain fixed.  
When $\b=\frac{n+2-\frac{1}{\a}}{2}$, the normal speed of the point $(x,u(x,t))$ 
is equal to $\a$ powers of the Guass curvature of the graph.  
Such parabolic Monge-Amp\`ere equations have been studied by many authors in recent years.  
See, for instance, %\cite{Chenli06}
\cite{HL06}\cite{DS12}. 
On the other hand, in the parametric setting, 
flow by Gauss curvature or its powers have received considerable interests, 
see \cite{Chou85}\cite{Chow85}\cite{Chow91}\cite{Andrews99}
\cite{Andrews00}\cite{GN15}\cite{AGN}
and the references therein. 

%\textcolor{red}{(The existence of such a solution: when $\a=1$, V. Oliker \cite{O1} says the existence can be extracted with some work from the results in the book by Krylov \cite{K1}. For general $\a $, the existence need to be checked or proved. )}   

%\textcolor{red}{(The case $\a \in (0, \frac 1 n]$ should be interesting and we may expect exponential convergence when $\a =1/n$. 
%The case $\a $ being negative may also be of interest. We will think about these questions later. }

%The special case $\a =1$ was studied by V. Oliker in \cite{O1}. 
V. Oliker considered \eqref{PGCF} with $\a =1$ in \cite{O1}. 
He analyzed the asymptotic behavior of smooth convex solutions of \eqref{PGCF}.
It turned out that solutions with different $\b$ all have the same asymptotic behavior. 
Moreover, if $\Omega$ is centrally symmetric or rotationally symmetric, 
then the solution $u(x,t)$ asymptotically becomes centrally symmetric or rotational symmetric, 
regardless of its initial shape.     

The goal of this paper is to generalize V. Oliker's results in \cite{O1} to any power $\a >1/n$. 
We investigate the asymptotic behavior of a smooth convex solution of \eqref{PGCF} and
show that, by comparing with self-similar solutions of \eqref{PGCF} with $\b=0$, 
the solution $u(x,t)$ asymptotically converges to the solution of the following nonlinear elliptic problem: 
 \begin{align*}
 %\begin{cases}
 & [\det(\psi_{ij})]^{\a} =  \frac{1}{1-n \a} \psi \text{  in  } \Omega, \text{  } \psi=0 \text{  on  } \p \Omega, \\
  & \psi \text{  is stictly convex and  } \psi <0 \text{  in  } \Omega. \nonumber
 %\end{cases}
 \end{align*}
%where $\l = \frac{1}{1-n \a}$ is negative.  
Furthermore, our estimate implies geometric properties of the flow by $\a$ powers of the Gauss curvature. 
For instance, the asymptotic behavior of $u(x,t)$ reflects the symmetries of $\Omega$. 
More precisely, if $\Omega$ is centrally or rotationally symmetric, 
then the solution $u(x,t)$ asymptotically becomes centrally or rotational symmetric, 
regardless of its initial shape, and we also give sharp estimates on the rate of this process.

Throughout out the paper, we %fix $\l = \frac{-1}{n \a -1} <0$ and 
denote by $M$ the Monge-Amp\`ere operator 
$M(u):=\det (u_{ij})$ and $M ^{\a}(u):=[\det(u_{ij})]^{\a}$. % when $u$ is $C^2$.  

\section{Main Results} %1

Consider the following initial boundary problem:
\begin{align} \label{MA}
& u_t =M ^{\a}(u)\text{  in  } \Omega \times(0,\infty), \nonumber \\
& u(x,t) =0 \text{  in  } \p \Omega \times (0,\infty), \\
& u(x,t) \text{  is strictly convex for each  }  t \geq 0. \nonumber 
\end{align}
 
We seek for self-similar solutions of \eqref{MA} of the form 
\begin{equation}\label{1.1}
u(x,t)=\vp(t) \psi(x),
\end{equation}
where $\vp(t) \in C^{\infty}([0,\infty))$ and $\psi(x) =C^{\infty}(\Omega) \cap  C^{0,1}(\ol{\Omega})$. 
By convexity of $u(x,0)=\vp(0)\psi(x)$, we have 
either $\vp(0) <0$ and $\psi(x) >0$ in $\Omega$ and concave
or $\vp(0) >0$ and $\psi(x) <0$ in $\Omega$ and convex. 
Since both cases are equivalent for our purpose, 
we always deal with the latter one. 
Substituting \eqref{1.1} into \eqref{MA} yields
$$\frac{\vp(t)}{\vp^{n \a}} = \frac{M^{\a}(\psi)}{\psi} =\l =\text{constant}. $$  
Noting that $\psi(x) <0$ and convex in $\Omega$, we get $\l \leq 0$ and 
\begin{equation}\label{1.2}
\vp(t) =\left(\vp(0)^{1-n \a} -(n\a -1) \l t\right)^{\frac{1}{1-n \a}},
\end{equation}
\begin{equation}\label{1.3}
M(\psi)=(\l \psi)^{\frac{1}{\a}} \text{  in  } \Omega \text{  and  } \psi =0 \text{  on  } \p \Omega.
\end{equation}
An easy argument shows that $\l =0$ implies $u(x,t) \equiv 0$. 
Thus we only consider the case $\l <0$. 
By scaling, it suffices to consider one negative value of $\l$ 
and thus we fix $\l =\frac{1}{1-n \a}<0$ %in the rest of this paper 
for convenience. 
The following result establishes the existence of self-similar solutions to \eqref{MA}.

\begin{thm}\label{Thm A}
Let $\Omega$ be a bounded strictly convex domain with smooth boundary $\p \Omega$. 
Then problem \eqref{MA} 
admits a self-similar solution in $\ol{\Omega}\times (0,\infty) $ given by
\begin{equation}
u(x,t)=(1+t)^{\frac{1}{1-n\a}} \psi(x),
\end{equation} 
where $\psi$ is the unique solution in $C^{\infty}(\Omega) \cap C^{0,1}(\ol{\Omega})$ of the equation
\begin{align}\label{EMA}
& M(\psi) =  \left(\frac{-\psi}{|1-n\a|} \right)^{\frac{1}{\a}} \text{  in  } \Omega, \text{  } \psi=0 \text{  on  } \p \Omega, \\
& \psi \text{  is stictly convex and  } \psi <0 \text{  in  } \Omega, \nonumber
\end{align}
and $\sup_{\Omega} |\psi(x)|$ admits an estimate depending only on $n$, $\a$ and the domain $\Omega$. 
Furthermore, if $\tu(x,t) =\vp(t) \tilde{\psi}(x) $ is an arbitrary self-similar solution of \eqref{MA}, 
then there exists a unique $c>0$ such that $\tilde{\psi}(x) =c\psi(x)$ and 
\begin{equation}\label{1.4}
\tu(x,t)=u(x,t) \left\{\frac{1+t}{[c\vp(0)]^{1-n\a}+t}\right\}^{\frac{1}{n \a -1}}. 
\end{equation}
\end{thm}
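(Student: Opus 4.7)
The plan is to separate the statement into two parts: first, establish existence, uniqueness, regularity, and the $L^\infty$ bound for the elliptic Monge-Amp\`ere problem \eqref{EMA}; second, derive the formula \eqref{1.4} from this uniqueness together with the separation-of-variables ODE for $\varphi$.

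For the elliptic problem, I would run the continuity method on the family
$$M(\psi^{(\tau)}) = \left(\frac{-\psi^{(\tau)}}{|1-n\a|}\right)^{1/\a}\!\cdot \tau + (1-\tau) f_0\quad\text{in }\Omega,\qquad \psi^{(\tau)}=0\text{ on }\p\Omega,$$
starting from a smooth strictly convex $\psi^{(0)}$ (whose existence is the Dirichlet problem of Caffarelli--Nirenberg--Spruck). Openness at parameter $\tau$ requires the linearized operator to be invertible; this linearization is a linear elliptic operator whose zeroth-order coefficient has the sign that makes the classical maximum principle apply (since the right-hand side is a nondecreasing function of $-\psi$), so the Fredholm alternative applies. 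Closedness reduces to a priori $C^{2,\b}(\ol\Omega)$ bounds. The $L^\infty$ bound---and hence the estimate on $\sup_\Omega|\psi|$---follows from barriers obtained by sandwiching $\Omega$ between two balls $B_r\subset\Omega\subset B_R$ and using explicit radially symmetric solutions of \eqref{EMA} on balls; these depend only on $n$, $\a$, $r$ and $R$. Gradient bounds on $\p\Omega$ come from the same barriers, and interior/boundary $C^2$ estimates follow from the standard Monge-Amp\`ere theory (Caffarelli, Krylov), after which interior $C^\infty$ is obtained by Schauder bootstrapping.

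Uniqueness of $\psi$ is a maximum principle argument: if $\psi_1,\psi_2$ are two solutions of \eqref{EMA}, then at an interior extremum of $\psi_1-\psi_2$ the concavity of $[\det(\cdot)]^{1/n}$ on positive-definite matrices combined with the strict monotonicity of the right-hand side in $-\psi$ forces $\psi_1\equiv\psi_2$.

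For the second part, substituting $\tu(x,t)=\vp(t)\tpsi(x)$ into \eqref{MA} separates variables to give
$$\frac{\vp'(t)}{\vp(t)^{n\a}} \;=\; \frac{M^\a(\tpsi)}{\tpsi} \;=\; \mu,$$
a constant that must be negative to avoid the trivial solution. Under the scaling $\tpsi\mapsto c\tpsi$ the quantity $M^\a(\tpsi)/\tpsi$ transforms by $c^{n\a-1}$, so there is a unique $c>0$ making the right constant equal to $1/(1-n\a)$; the uniqueness proved above then yields $\tpsi=c\psi$. Integrating the resulting ODE gives $\vp(t)^{1-n\a}=\vp(0)^{1-n\a}+c^{n\a-1}t$, and the identity
$$c^{1-n\a}\bigl(\vp(0)^{1-n\a}+c^{n\a-1}t\bigr)\;=\;[c\vp(0)]^{1-n\a}+t$$
allows one to factor $c\vp(t)=\bigl([c\vp(0)]^{1-n\a}+t\bigr)^{1/(1-n\a)}$, which when divided by $(1+t)^{1/(1-n\a)}\psi(x)=u(x,t)/\psi(x)$ delivers \eqref{1.4}.

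The main obstacle is the a priori $C^{2,\b}(\ol\Omega)$ estimate along the continuity path: the right-hand side of \eqref{EMA} depends on $\psi$, vanishes on $\p\Omega$, and has only H\"older regularity there because of the exponent $1/\a$. This degeneracy at the boundary is by now standard in the Monge-Amp\`ere literature but must be handled carefully; once it is in place, everything else---openness, $L^\infty$ bound via ball barriers, uniqueness via maximum principle, and the closed-form computation of \eqref{1.4}---is essentially algebraic.
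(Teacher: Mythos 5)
Your derivation of \eqref{1.4} is correct and matches the paper in substance, but the route you take to existence, uniqueness and the $L^\infty$ bound is genuinely different: the paper simply cites K. Tso's theorem in \cite{Chou90} for the existence and uniqueness of $\psi$, and proves the $L^\infty$ estimate by comparing the Monge--Amp\`ere measure of $\psi$ with that of the cone over $\p\Omega$ with apex at the minimum point, via \cite[Lemma 1.4.1]{GC1} and the Aleksandrov--Bakelman--Pucci estimate. Your ball-barrier argument for the $L^\infty$ bound is an acceptable alternative (and arguably more geometric), and a continuity-method proof of existence is in spirit what \cite{Chou90} does, so in principle you are reproving a cited result rather than invoking it. However, as written, your continuity-method sketch has two genuine gaps.

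First, the sign is wrong in your openness step. The right-hand side $g(\psi)=\bigl(\tfrac{-\psi}{|1-n\a|}\bigr)^{1/\a}$ is a \emph{decreasing} function of $\psi$, so the linearized equation reads
$\sum a^{ij}v_{ij}-g'(\psi)v=h$ with $-g'(\psi)>0$, i.e.\ the zeroth-order coefficient is positive. That is precisely the sign for which the classical maximum principle and the Fredholm alternative do \emph{not} give invertibility for free. The same sign issue undermines the uniqueness argument as you state it: if $\psi_1-\psi_2$ has a negative interior minimum at $x_0$, then $D^2\psi_1\ge D^2\psi_2$ there gives $\det D^2\psi_1\ge\det D^2\psi_2$, while $-\psi_1(x_0)>-\psi_2(x_0)$ gives $g(\psi_1)>g(\psi_2)$ --- these are \emph{consistent}, not contradictory. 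The correct uniqueness proof for $\a>1/n$ is a sliding/scaling argument: set $t^*=\sup\{t>0: t\psi_1\ge\psi_2\}$; if $t^*<1$, then at the touching point $\det D^2(t^*\psi_1)=(t^*)^n\,g(\psi_1)$ while $\det D^2\psi_2=g(\psi_2)=(t^*)^{1/\a}g(\psi_1)$, and since $n>1/\a$ and $t^*<1$ one gets $(t^*)^n<(t^*)^{1/\a}$, contradicting $\det D^2(t^*\psi_1)\ge\det D^2\psi_2$. Concavity of $\det^{1/n}$ alone does not fix the sign.

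Second, your closedness step asks for an a priori $C^{2,\b}(\ol\Omega)$ bound, but the theorem itself only asserts $\psi\in C^\infty(\Omega)\cap C^{0,1}(\ol\Omega)$. Because the right-hand side vanishes on $\p\Omega$ the equation degenerates there and the solution generically fails to be $C^2$ up to the boundary; there is no global $C^{2,\b}(\ol\Omega)$ estimate to be had, so a continuity path in that space cannot close. You flag this as ``the main obstacle'' but do not resolve it. The standard fix --- and effectively what is done in \cite{Chou90} --- is to solve a regularized or approximating problem that \emph{is} smooth up to the boundary, derive $L^\infty$, global Lipschitz, and interior $C^{2,\b}$ estimates that are uniform in the approximation, and pass to the limit. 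Without this (or an equivalent Perron-type construction), the existence argument is incomplete.
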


The main theorem concerning the asymptotic behavior of the solution is the following: 
\begin{thm} \label{Thm B}
Let $u(x,t) \in C^2(\ol{\Omega}\times (0,\infty))$ be a solution of the problem
%Let $u$ be a solution of the problem
\begin{align} \label{PGCF2}
& u_t =\frac{M^{\a}(u)}{(1+|\nabla u|^2)^{\a \b}} \text{  in  } \Omega\times(0,\infty), \nonumber \\
& u(x,t) =0 \text{  in  } \p \Omega \times (0,\infty), \\
& u(x,t) \text{  is strictly convex for each  }  t \geq 0, \nonumber 
\end{align}
where $\a > 1/n$ and $\b \geq 0$ are constants.  
If $\b =0$, then there exists positive constant $C_1$ depending only on dimension $n$, $\a $, $\Omega$ and $u(x,0)$, such that 
for all $t\geq 0$, 
\begin{equation}\label{2.1}
\sup_{\Omega} \left|(1+t)^{\frac{1}{n \a -1}} u(x,t) -\psi(x) \right|\leq \frac{C_1}{1+t},
\end{equation}

If $\b >0$, then 
\begin{equation}\label{2.2}
\left[\frac{C_2}{1+t} +G^{\frac{1}{1-n \a}}-1\right] \psi 
\leq  (1+t)^{\frac{1}{n \a -1}} u(x,t) -\psi(x) 
\leq \frac{-C_3 \psi}{1+t},
\end{equation}
where $C_2$ and $C_3$ are positive constants depending only on dimension $n$, $\a $, $\Omega$,  $u(x,0)$ and 
\begin{equation*}
G=\inf_{\Omega} \left(1+|\nabla u(x,0)|^2\right)^{-\a \b}.
\end{equation*}
Moreover, 
\begin{equation}\label{2.3}
\lim_{t\to \infty} (1+t)^{\frac{1}{n \a -1}} u(x,t) =\psi(x) \text{  uniformly on  } \ol{\Omega}.   
\end{equation}
\end{thm}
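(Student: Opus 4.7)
The plan is to compare $u$ with a family of self-similar solutions of the pure Monge-Amp\`ere flow \eqref{MA}. Since both $u(\cdot,0)$ and $\psi$ are smooth, strictly convex, vanish on $\p\Omega$, and are negative in $\Omega$, the ratio $u(\cdot,0)/\psi$ extends to a positive continuous function on $\ol{\Omega}$; setting $c_1=\min_{\ol{\Omega}}(u(\cdot,0)/\psi)$ and $c_2=\max_{\ol{\Omega}}(u(\cdot,0)/\psi)$ gives $c_2\psi\le u(\cdot,0)\le c_1\psi$. For each $c>0$, direct substitution shows that
\[
v_c(x,t) := (c^{1-n\a}+t)^{1/(1-n\a)}\psi(x)
\]
satisfies $(v_c)_t=M^{\a}(v_c)$ in $\Omega\times(0,\infty)$ with $v_c|_{\p\Omega}=0$ and $v_c(\cdot,0)=c\psi$.

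For $\b=0$, $v_{c_1}$ and $v_{c_2}$ are solutions of the same equation as $u$ and sandwich $u$ on the parabolic boundary, so the comparison principle for the parabolic Monge-Amp\`ere operator (applicable to strictly convex Dirichlet solutions) gives $v_{c_2}\le u\le v_{c_1}$. Writing $\tilde u(x,t):=(1+t)^{1/(n\a-1)}u(x,t)$, this rewrites as
\[
\lf(\tfrac{c_2^{1-n\a}+t}{1+t}\ri)^{1/(1-n\a)}\psi \le \tilde u \le \lf(\tfrac{c_1^{1-n\a}+t}{1+t}\ri)^{1/(1-n\a)}\psi,
\]
and the elementary estimate $|(\tfrac{c^{1-n\a}+t}{1+t})^{1/(1-n\a)}-1|\le C_c/(1+t)$ (from a Taylor expansion at $t=\infty$) together with boundedness of $\psi$ yields \eqref{2.1}. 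For $\b>0$, $(1+|\nabla v_{c_1}|^2)^{-\a\b}\le 1$ implies $(v_{c_1})_t=M^{\a}(v_{c_1})\ge M^{\a}(v_{c_1})/(1+|\nabla v_{c_1}|^2)^{\a\b}$, so $v_{c_1}$ is a supersolution of \eqref{PGCF2}; the same comparison gives $u\le v_{c_1}$, which after the expansion above proves the upper bound of \eqref{2.2}.

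The lower bound in \eqref{2.2} is the delicate step and requires a gradient non-increase estimate: $\sup_{\ol{\Omega}}|\nabla u(\cdot,t)|\le\sup_{\ol{\Omega}}|\nabla u(\cdot,0)|$ for all $t\ge 0$. I plan to prove this in two steps. (i) Since $u_t>0$ in $\Omega$ and $u\equiv 0$ on $\p\Omega$, the family $\{u(\cdot,t)\}$ is pointwise non-decreasing in $t$, and difference quotients in the inward-normal direction at a fixed $x_0\in\p\Omega$ show that the outer normal derivative $u_\nu(x_0,t)=\lim_{h\to 0^+}-u(x_0-h\nu,t)/h$ is non-increasing in $t$. (ii) Strict convexity forces every interior critical point of $|\nabla u|^2$ to satisfy $D^2u\cdot\nabla u=\tfrac12\nabla|\nabla u|^2=0$, hence $\nabla u=0$ there, so $|\nabla u|$ has no positive interior maximum; thus $\sup_{\ol{\Omega}}|\nabla u(\cdot,t)|=\sup_{\p\Omega}u_\nu(\cdot,t)$, non-increasing by (i). Consequently $(1+|\nabla u|^2)^{-\a\b}\ge G$ throughout $\Omega\times[0,\infty)$, so $u_t\ge G\cdot M^{\a}(u)$; i.e., $u$ is a supersolution of the modified flow $w_t=G\,M^{\a}(w)$. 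Its self-similar solution $\ul{w}(x,t):=(c_2^{1-n\a}+Gt)^{1/(1-n\a)}\psi(x)$ satisfies $\ul{w}(\cdot,0)=c_2\psi\le u(\cdot,0)$, so comparison gives $\ul{w}\le u$. Expanding
\[
\lf(\tfrac{c_2^{1-n\a}+Gt}{1+t}\ri)^{1/(1-n\a)}=G^{1/(1-n\a)}+O\!\lf(\tfrac{1}{1+t}\ri)
\]
yields the lower bound of \eqref{2.2} with explicit constants $C_2,C_3$.

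Uniform convergence \eqref{2.3} is obtained by restarting. The upper bound of \eqref{2.2} already gives $\limsup_{t\to\infty}(\tilde u-\psi)\le 0$ uniformly. The inequality $\ul{w}\le u$ together with Lipschitz regularity of $\psi$ gives $|u(x,t_0)|\le C(1+t_0)^{1/(1-n\a)}|\psi(x)|$, whence the boundary-normal-derivative representation combined with step (ii) above yields $\|\nabla u(\cdot,t_0)\|_{L^\infty(\ol{\Omega})}\to 0$ and $G(t_0):=\inf_{\ol{\Omega}}(1+|\nabla u(\cdot,t_0)|^2)^{-\a\b}\to 1$ as $t_0\to\infty$. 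Applying the lower bound of \eqref{2.2} to the flow restarted at $t_0$, with $G$ replaced by $G(t_0)$, drives the asymptotic error $(G(t_0)^{1/(1-n\a)}-1)\psi$ to zero and matches the upper bound uniformly. The main obstacle is the gradient non-increase estimate of step (ii), together with its refinement $G(t_0)\to 1$; the remaining work is comparison with explicit self-similar profiles plus Taylor expansion in $1/(1+t)$.
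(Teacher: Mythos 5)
Your proposal is essentially correct and follows the same strategy as the paper: bound $u$ between self-similar solutions of $w_t = G\,M^{\alpha}(w)$ (below) and $w_t = M^{\alpha}(w)$ (above), where $G = \inf_{\Omega}(1+|\nabla u(\cdot,0)|^2)^{-\alpha\beta}$ comes from a monotone gradient bound, then expand the resulting ratio to get the $O(1/(1+t))$ rates, and finally restart the flow at large $t_0$ to drive $G(t_0)\to 1$ and conclude \eqref{2.3}.

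The only substantive difference is expository: where you invoke ``a Taylor expansion at $t=\infty$'' to bound $\lf|\lf(\tfrac{c^{1-n\alpha}+t}{1+t}\ri)^{1/(1-n\alpha)}-1\ri|$, the paper isolates this as an explicit Lemma (Lemma \ref{Lemma F}) and treats the two regimes $\alpha\ge 2/n$ (exponent $\le 1$) and $1/n<\alpha\le 2/n$ (exponent $\ge 1$) separately via the elementary inequalities $(1+x)^{\gamma}\le 1+\gamma x$ versus $(1+x)^{\gamma}\le 1+\gamma(1+a)^{\gamma-1}x$; your Taylor remark is correct but should be spelled out with explicit constants, since for $\gamma>1$ the linear bound only holds with a $c$-dependent prefactor, which is exactly what determines $C_2,C_3$. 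You also supply a self-contained proof of the gradient monotonicity $\sup_{\p\Omega}u_{\nu}(\cdot,t)\le\sup_{\p\Omega}u_{\nu}(\cdot,0)$ (via $u_t>0$ and interior critical points of $|\nabla u|^2$), which the paper only cites from Oliker; your argument is valid. One minor point worth adding is to justify that $u(\cdot,0)/\psi$ has positive finite $\min$ and $\max$ on $\ol\Omega$ (both functions vanish linearly at $\p\Omega$ with nonvanishing normal derivative, by strict convexity), and to note in the restart step that the intermediate constant $c_{t_0}$ stays finite (the paper records it stays uniformly bounded), which is what lets you send first $t\to\infty$ and then $t_0\to\infty$.
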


We have gradient estimates for solutions of \eqref{PGCF2}. 
\begin{corollary}\label{Cor 1}
Suppose the same conditions as in Theorem \ref{Thm B} holds. Then for all $t\geq 0$, 
\begin{equation*}
\sup_{\Omega}|\nabla u(x,t)| \leq G^{\frac{1}{1-n\a}} \sup_{\p \Omega} \psi_{\nu}(x) (C_4+t)^{\frac{1}{1-n\a}}
\end{equation*}
where $\psi_{\nu}$ is the derivative in the direction of the outward unit normal to $\p \Omega$, 
and $C_4$ depends only on $u(x,0)$. 
\end{corollary}
An interesting geometric consequence of Theorem \ref{Thm B} is the following: 
\begin{thm}\label{Thm C}
If $\Omega $ is a ball in $\R^n$ and $u(x,t) \in C^2(\ol{\Omega}\times (0,\infty))$ is a solution of \eqref{PGCF2}. Then 
$$(1+t)^{\frac{1}{n \a -1}} u(x,t) \to \psi(|x|) \text{  uniformly on  } \ol{\Omega} \text{  as  } t\to \infty. $$
\end{thm}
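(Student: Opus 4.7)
The plan is to deduce Theorem \ref{Thm C} as a short corollary of Theorems \ref{Thm A} and \ref{Thm B}, leveraging uniqueness of the stationary profile $\psi$ together with the rotational symmetry of the ball. The key observation is that the elliptic problem \eqref{EMA} is invariant under orthogonal transformations of $\R^n$: for any $R\in \SO(n)$, if $\psi$ solves \eqref{EMA} on $\Omega$, then $\psi\circ R$ solves the same equation on $R^{-1}\Omega$, because the Monge-Amp\`ere operator $M(\cdot)$ is invariant under rigid motions (as $\det((\psi\circ R)_{ij})(x)=\det(R^{T}(D^2\psi)(Rx)R)=\det(D^2\psi)(Rx)$), while the boundary condition and convexity conditions are preserved.

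First I would invoke this invariance in the case $\Omega=B_r(0)\subset \R^n$. For any $R\in\SO(n)$, we have $R^{-1}\Omega=\Omega$, so $\psi\circ R$ is a solution of \eqref{EMA} on the same domain. By the uniqueness assertion in Theorem \ref{Thm A}, $\psi\circ R=\psi$ for every $R\in\SO(n)$. Hence $\psi$ depends only on $|x|$, i.e.\ $\psi(x)=\psi(|x|)$.

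Next I would apply Theorem \ref{Thm B} directly: regardless of whether $\b=0$ or $\b>0$, the estimates \eqref{2.1} and \eqref{2.3} yield
\begin{equation*}
\lim_{t\to\infty} (1+t)^{\frac{1}{n\a-1}} u(x,t)=\psi(x) \quad \text{uniformly on }\ol{\Omega}.
\end{equation*}
Substituting the radial identification $\psi(x)=\psi(|x|)$ established in the previous step gives exactly the conclusion of Theorem \ref{Thm C}.

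I do not expect any genuine obstacle: the argument is a two-line symmetrization/uniqueness argument followed by a citation of the already-established Theorem \ref{Thm B}. The only minor point to verify carefully is the rotation invariance of $M(\psi)$, which reduces to the standard computation $\det(R^T A R)=\det A$ for $R\in\SO(n)$. No new estimates, no new PDE input, and no regularity considerations beyond those already handled in Theorems \ref{Thm A} and \ref{Thm B} are required.
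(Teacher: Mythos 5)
Your proposal is correct, and it is essentially the argument the paper intends: the paper omits the proof of Theorem \ref{Thm C} with a pointer to Section 6 of Oliker's paper \cite{O1}, and the content there is precisely this symmetrization-by-uniqueness observation (rotation invariance of the Monge--Amp\`ere operator, uniqueness of $\psi$ from Theorem \ref{Thm A} forcing $\psi\circ R=\psi$ for all $R\in\SO(n)$, hence radiality), followed by the convergence furnished by Theorem \ref{Thm B}. The only implicit hypothesis worth stating explicitly is that the ball is centered at the origin, which is what makes $R\Omega=\Omega$ and is needed for the conclusion $\psi(x)=\psi(|x|)$ to be meaningful.
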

This theorem implies that, 
$u(x,t)$ asymptotically becomes radially symmetric regardless of the initial shape. 
More generally, if $\Omega$ is centrally symmetric, then 
$$(1+t)^{\frac{1}{n \a -1}} u(x,t) \to \psi(x) \text{  uniformly on  } \ol{\Omega} \text{  as  } t\to \infty , $$
where $\psi(x)=\psi(-x)$. 
The proof of Theorem \ref{Thm C} is the same as in \cite[Section 6]{O1} and we omit it here.

\section{Proof of Theorem \ref{Thm A}}
\begin{proof} %[Proof of Theorem \ref{Thm A}]
It was shown in \cite[Corollary 4.2, in which (2) should read as (1.2)]{Chou90} that for any $\a >1/n$, 
problem \eqref{EMA} admits a unique strictly convex solution $\psi$ in $C^{\infty}(\Omega) \cap C^{0,1}(\ol{\Omega})$.  
Direct calculation shows $u(x,t)=(1+t)^{\frac{1}{1-n\a}} \psi(x)$ solves \eqref{MA} with initial data $u_0(x)=\psi(x)$.
Next we prove $\sup_{\Omega} |\psi(x)|$ depends only on $n, \a $ and $\Omega$. 
Since $\psi$ is strictly convex and vanishes on $\p \Omega$, there exists a point $\bar{x} \in \Omega$ such that 
$\sup_{\Omega} |\psi| =|\psi(\bar{x})|$. 
Consider a cone $K$ generated by the linear segments joining the vertex $(\bar{x}, \psi(\bar{x}))$ with points on $\p \Omega$. 
Denote $\theta(x), x \in \ol{\Omega}$, the function whose graph is $K$. 
Obviously, $\theta \geq \psi$ in $\Omega$ and $\theta = \psi=0$ on $\p \Omega$. 
Then by \cite[Lemma 1.4.1]{GC1}
$M\theta (\Omega) \leq M\psi(\Omega)$, 
where $Mu$ denotes the Monge-Ampere measure associated with the function $u$(see \cite[Theorem 1.1.13]{GC1}).  
Since $\psi$ is $C^\infty$ and convex on $\Omega$, 
\begin{equation}
M\psi(\Omega)=\int_{\Omega} M(\psi)  =
\int_{\Omega} (\l \psi)^{\frac{1}{\a}} 
\leq |\l |^{\frac{1}{\a}} |\psi(\bar{x})|^{\frac{1}{\a}} |\Omega|.
\end{equation}
On the other hand, the Aleksandrov-Bakelman-Pucci maximum principle (see, for instance, \cite[Theorem 1.4.5]{GC1}) says 
$M\theta(\Omega) \geq \omega_n |\psi(\bar{x})|^n (\text{diam} \Omega)^{-n}$, 
where $\omega_n$ is the volume of the unit ball in $\R^n$. % and $D$ is the diameter of $\Omega$. 
Thus 
\begin{equation}
\sup_{\Omega} |\psi(x)| = |\psi(\bar{x})| \leq \left(\frac{|\l|^{\frac{1}{\a}}|\Omega| (\text{diam} \Omega)^n}{\omega_n }\right)^{\frac{\a }{n \a -1}}.
\end{equation}
Finally, the proof of \eqref{1.4} parallels that in \cite[Section 4.3]{O1}.
\end{proof}

\begin{remark}
One can prove Theorem \ref{Thm A} without using the existence results from \cite{Chou90}. 
V. Oliker \cite{O1} proved that 
\eqref{EMA} has a unique solution in $C^{\infty}(\Omega) \cap C^{0,1}(\ol{\Omega})$ when $\a =1$. 
A careful examination of his proof shows it works indeed for all $\a >1/n$. % It would be tedious. 
%\eqref{EMA} has a unique solution in $C^{\infty}(\Omega) \cap C^{0,1}(\ol{\Omega})$. 
\end{remark}
\begin{remark}
When $\a =1/n$, it was shown by P. L. Lions\cite{L1} that 
\begin{equation}\label{2.5}
M(\psi) = \mu (-\psi)^n \text{  in  } \Omega, \text{    }\psi =0 \text{  on  } \p \Omega
\end{equation}
admits a unique solution pair $(\mu, \psi)$ in the sense that 
if $(\nu, \phi)$, where $\nu$ is positive and $\phi$ is convex, solves \eqref{2.5}, 
then we must have $\mu =\nu$ and $\phi$ is a constant multiple of $\psi$.  
The number $\mu$ is called the first (in fact the only) eigenvalue of the Monge-Amp\`ere operator $M$, 
and the corresponding (normalized) eigenfunction is in $C^{\infty}(\Omega) \cap C^{1,1}(\ol{\Omega})$.  
The asymptotic behavior for $\a =1/n$ remains interesting and open.       
\end{remark}
\begin{remark}
When $0<\a <1/n$, K. Tso\cite[Theorem E]{Chou90} showed that 
\eqref{EMA} admits a convex solution in $C^{\infty}(\Omega) \cap C^{0,1}(\ol{\Omega})$. 
The uniqueness, however, is not known. 
In this case, the reader will see easily from the comparison with self-similar supersolutions in Section 4 
that smooth convex solutions of \eqref{PGCF2} must vanish at finite time. 
\end{remark}

\section{Proof of Theorem \ref{Thm B}}
%Let $u(x,t)$ be a solution of \eqref{PGCF2}. 
In this section, 
we determine the asymptotic behavior of $u$ by comparing with self-similar solutions of \eqref{MA}.
A direct generalization of the proof given by V. Oliker in \cite{O1} works for $\a \geq 2/n$. 
New estimates are introduced in the following lemma to take care of the case $1/n < \a < 2/n$. 
%for the case $\a =1$, we are able to prove the theorem.  
\begin{lemma}\label{Lemma F}
Let $F:(0,S)\times [0,\infty) \to (0,\infty), S< \infty$ be defined by 
\begin{equation}
F(s,t) =\left(\frac{1+t}{s+t}\right)^{\frac{1}{n\a-1}} \equiv \left(1+\frac{1-s}{s+t}\right)^{\frac{1}{n\a-1}}.
\end{equation}
Then we have for all $t\geq 0$,
\begin{align}
F(s,t) & %\leq \left(1+\frac{1-s}{s(1+t)}\right)^{\frac{1}{n\a-1}} 
\leq 1+\frac{1}{n\a-1}\frac{1-s}{s(1+t)}, 
&\text{  if   } s\leq 1, \a \geq  2/n; \label{F1} \\ 
F(s,t) &\leq   1+\frac{1}{n\a-1} \left(\frac{1}{s}\right)^{\frac{1}{n \a -1}}\frac{1-s}{1+t},  
&\mbox{  if  } s \leq 1, \a \leq  2/n; \label{F2} \\
F(s,t) & %\geq 1-\frac{s-1}{1+t}\frac{1+t}{s+t} 
\geq 1-\frac{s-1}{1+t}, 
&  \mbox{  if  } s\geq 1, \a \geq 2/n; \label{F3}\\
F(s,t) & %\geq \left(1-\frac{s-1}{1+t}\right)^{\frac{1}{n\a-1}}  
\geq 1-{\frac{1}{n\a-1}} \frac{s-1}{1+t} 
&\mbox{  if  } s\geq 1, \a \leq  2/n. \label{F4} 
\end{align}
\end{lemma}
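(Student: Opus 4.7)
The plan is to derive all four estimates from a single integral representation. Set $p := 1/(n\alpha-1) > 0$, so that $p \leq 1$ precisely when $\alpha \geq 2/n$, while $p \geq 1$ precisely when $\alpha \leq 2/n$. Writing $F(s,t) = (1+y)^p$ with $y = (1-s)/(s+t)$, my starting point is
\[
F(s,t) - 1 \;=\; \int_0^y p(1+z)^{p-1}\, dz,
\]
understood in the oriented sense when $y < 0$. The strategy is to bound the integrand differently in each of the four cases.

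For the upper bounds \eqref{F1} and \eqref{F2} one has $s \leq 1$ and $y \geq 0$. In case \eqref{F1}, $p \leq 1$ gives $(1+z)^{p-1} \leq 1$ on $[0,y]$, so $F - 1 \leq py$; I would then apply the elementary inequality $s+t \geq s(1+t)$ (valid for $s \leq 1$) to convert $py = p(1-s)/(s+t)$ into $p(1-s)/[s(1+t)]$, matching \eqref{F1}. In case \eqref{F2}, $p \geq 1$ lets me use the monotone upper bound $(1+z)^{p-1} \leq (1+y)^{p-1} = ((1+t)/(s+t))^{p-1}$, producing
\[
F(s,t) - 1 \;\leq\; p\, y (1+y)^{p-1} \;=\; \frac{p(1-s)(1+t)^{p-1}}{(s+t)^{p}};
\]
applying $s+t \geq s(1+t)$ to the $p$-th power in the denominator then reduces this to $p(1-s) s^{-p}/(1+t)$, as claimed.

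For the lower bounds \eqref{F3} and \eqref{F4} one has $s \geq 1$ and $y \leq 0$. I would differentiate $F$ in $s$ to rewrite the identity as
\[
1 - F(s,t) \;=\; \int_1^s \frac{p(1+t)^p}{(r+t)^{p+1}}\, dr,
\]
and bound $(r+t)^{p+1} \geq (1+t)^{p+1}$ for $r \geq 1$ to estimate the integrand above by $p/(1+t)$. Integration yields $F(s,t) \geq 1 - p(s-1)/(1+t)$, which is exactly \eqref{F4}; when $p \leq 1$ the same expression weakens immediately to \eqref{F3}.

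The main obstacle will be case \eqref{F2}: one must bound $(1+z)^{p-1}$ by its value at the upper endpoint $z = y$ rather than at $z = 0$, and then apply the inequality $s+t \geq s(1+t)$ inside the $p$-th power of the denominator so that the surviving $(1+t)^{p-1}/(s+t)^p$ factor collapses cleanly to $s^{-p}/(1+t)$. The three other cases amount to Bernoulli-type monotonicity of the integrand and are routine.
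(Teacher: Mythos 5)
Your argument is correct and takes essentially the same route as the paper's: both rest on the convexity/monotonicity of $(1+y)^{p}$ with $p=\frac{1}{n\alpha-1}$ (the paper phrases this through Bernoulli-type pointwise inequalities, you through bounds on the integrand in $\int_0^y p(1+z)^{p-1}\,dz$), followed by the elementary inequality $s+t\ge s(1+t)$ for $s\le 1$ or $s+t\ge 1+t$ for $s\ge 1$. The one small economy you introduce is obtaining \eqref{F3} as a weakening of \eqref{F4}; the paper proves \eqref{F3} independently via $x^{\gamma}\ge x$ on $[0,1]$ for $\gamma\le 1$.
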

\begin{proof}
This lemma follows from elementary calculus. 
When $\a \geq 2/n$,  $\g :=\frac{1}{n\a-1} \leq 1$. 
Then \eqref{F1} follows from $(1+x)^{\g} \leq 1+\g x$ for all $x \geq 0$
and \eqref{F3} follows from $x^\g \geq  x $ for all $0\leq x\leq 1$.  
When $\a \leq 2/n$,  $\g :=\frac{1}{n\a-1} \geq 1$. 
Now \eqref{F2} is a consequence of $(1+x)^{\g} \leq 1+\g(1+a)^{\g -1} x$ for all $ 0\leq x \leq a$
and \eqref{F4} is a consequence of $(1+x)^{\g} \geq 1+\g x$ for all $ -1 < x \leq 0$. 
\end{proof}

%\subsection{Uniform Estimate of $|\nabla u(x,t)|$}
%\text{ }\\
\begin{proof}[Proof of Theorem \ref{Thm B}]
First of all, a uniform estimate of $|\nabla u(x,t)|$ is obtained similarly as in \cite{O1}. For any $t\geq 0$, 
\begin{equation}\label{a.1}
\sup_{\ol{\Omega}} |\nabla u(x,t)| \leq \sup_{\p \Omega} |\nabla u(x,t)| 
=\sup_{\p \Omega} |u_{\nu}(x,t)| \leq \sup_{\p \Omega}|u_{\nu}(x,0)|.
\end{equation}
%The proof is similar to that of Lemma \ref{lemma 3}. 
%By convexity of $u(x,t)$ for each $t$, $|\nabla u|$ attains its maximum on $\p \Omega$. 
%Since $u(x,t)=0$ on $\p \Omega$, 
%it suffices to estimate the normal derivative $ |u_{\nu}(x,t)|$, 
%where $\nu$ denotes the outward unit normal vector field on $\p \Omega$. 
%Note that for each $x\in \Omega$, $u(x,t) \geq u(x,0)$ for $t\geq 0 $ since it satisfies \eqref{PGCF}. 
%Since $u(x,t)=0$ on $\p \Omega$, we have $u_{\nu}(x,t) \leq u_{\nu}(x,0)$ on $\p \Omega$. 
%Also, $u_{\nu}(x,t) \geq 0$ on $\p \Omega$ due to convexity of $u(x,t)$. 
%Thus we obtain uniform estimate for  $|\nabla u(x,t)|$.
Self-similar subsolution and supersolution are then constructed as follows:  
%\subsection{Comparing with Self-similar Subsolution}
%\text{ }\\
Let $$G=\inf_{\Omega} \left(1+|\nabla u(x,0)|^2\right)^{-\a \b}.$$
%Since $|\nabla u(x,t)| \leq \sup_{\Omega} |\nabla u(x,0)| \leq C$ in $\ol{\Omega} \times [0, \infty)$, 
Clearly we have $0<G\leq 1$. It follows from \eqref{a.1} that
\begin{equation}
GM^{\a}(u) \leq \left(1+|\nabla u(x,t)|^2\right)^{-\a \b} M^{\a}(u) =u_t \text{  in  } \Omega \times (0,\infty). 
\end{equation}
%Let $\underline{\psi}=G^{\frac{1}{1-n \a}} \psi $, where $\psi$ is the solution of \eqref{EMA}.  
Put 
$\underline{u}(x,t)=G^{\frac{1}{1-n \a}} \underline{\vp}(t)\psi(x)$ and
$\ol{u}(x,t)=\ol{\vp}(t)\psi(x)$, 
where $\psi$ is the solution of \eqref{EMA} and 
\begin{align*}
\underline{\vp}(t)=\left(\underline{\vp}(0)^{1-n \a} +t\right)^{\frac{1}{1-n \a}},\\
\ol{\vp}(t)=\left(\ol{\vp}(0)^{1-n \a} +t\right)^{\frac{1}{1-n \a}}.
\end{align*}
%$$$$ 
%$\underline{u}(x,t)=\underline{\vp}(t)\underline{\psi}(x)$, 
Then $\underline{u}$ and $\ol{u} $ satisfy
$\underline{u}_t =GM^{\a}(\underline{u}) $ %\text{  in  } \Omega \times (0,\infty)$
and  $\ol{u}_t =M^{\a}(\ol{u}) \text{  in  } \Omega \times (0,\infty) $, respectively. 
%\begin{equation}
%\underline{u}_t =GM^{\a}(\underline{u}) \text{  in  } \Omega \times (0,\infty) \text{  and  } 
%\underline{u}(x,0)=\underline{\vp}(0) \underline{\psi}(x). 
%\end{equation} 
Finally we define $\tu(x,t)=\underline{u}(x,t)-u(x,t)$ and it satisfies
\begin{equation}
\tu_t =GM^{\a}(\underline{u}) - \left(1+|\nabla u(x,t)|^2\right)^{-\a \b} M^{\a}(u) 
\leq GM^{\a}(\underline{u})-GM^{\a}(u) \text{  in  } \Omega \times (0,\infty).
\end{equation}
Observe that the operator $L(\tu)=M^{\a}(\underline{u}) -M^{\a}(u)$ is elliptic since 
\begin{equation*}
L(\tu) = \sum_{ij} \left(\int_{0}^{1}\a \det(u_{\tau i j})^{\a -1} \mbox{cof}(u_{\tau i j}) d\tau \right) \tu_{ij},
\end{equation*}
where $u_{\tau}(x,t)= \tau \underline{u}(x,t)+ (1-\tau)u(x,t) $ is strictly convex 
and the cofactor matrix $\mbox{cof}(u_{\tau i j})$ is positive definite on any compact subset of $\Omega \times (0,T]$ for any $T<\infty$. 
Next we choose $\underline{\vp}(0)$ and $\bar{\vp}(0)$ 
so that $\underline{\vp}(0)\psi(x) \leq u(x,0) \leq \bar{\vp}(0)\psi(x) $ on $\Omega$. 
Then  
\begin{equation}
\tu(x,0) \leq 0 \text{  in  } \ol{\Omega} \text{  and  } \tu(x,t) = 0  \text{  in  }\p \Omega \times [0, \infty),
\end{equation}
and we can then apply the classical maximum principle to conclude that 
$\tu(x,t)=\underline{u}(x,t)-u(x,t)\leq 0$ on $\ol{\Omega}\times [0,\infty)$. 
Consequently, 
\begin{equation}\label{4.1}
\left\{(1+t)^{\frac{1}{n \a -1}}\left(G (\ul{\vp}(0)^{1-n\a}+t)\right)^{\frac{1}{1-n\a}}-1 \right\}\psi(x)
\leq (1+t)^{\frac{1}{n \a -1}}u(x,t) -\psi(x). 
\end{equation}
Similarly, one derives that $u(x,t) \leq \bar{u}(x,t)$, namely, 
%\subsection{Comparing with Self-similar Supersolution}
%\text{  } \\
%We then construct a self-similar solution. 
%Let
%$\ol{\vp}(t)=\left(\ol{\vp}(0)^{1-n \a} +t\right)^{\frac{1}{1-n \a}}$ and 
%$\ol{u}(x,t)=\ol{\vp}(t)\psi(x)$, 
%where we choose $\ol{\vp}(0)$ so that $\ol{\vp}(0)\psi(x) \geq u(x,0)$. 
%Then 
%\begin{equation}
%\ol{u}_t =M^{\a}(\ol{u}) \text{  in  } \Omega \times (0,\infty) \text{  and  } 
%\ol{u}(x,0)=\ol{\vp}(0) {\psi}(x). 
%\end{equation} 
%Since $u_t =(1+|\nabla u|^2)^{-\a \b} M^{\a}(u) \leq M^{\a}(u)$, we obtain that 
%$$u_t - \bar{u}_t \leq L(u-\bar{u}).$$ 
%The maximum principle applies and we conclude that $u(x,t) \leq \bar{u}(x,t) $ on $\ol{\Omega}\times [0,\infty)$, namely,
\begin{equation} \label{4.5}
(1+t)^{\frac{1}{n \a -1}}u(x,t) -\psi(x) \leq \left\{(1+t)^{\frac{1}{n \a -1}}(\ol{\vp}(0)^{1-n\a}+t)^{\frac{1}{1-n\a}}-1 \right\}\psi(x)
\end{equation}
%\subsection{Proof of Theorem \ref{Thm B}}
%\subsubsection{The case $\a >2/n$}
%\text{ } \\
%In this case we have $\frac{1}{1-n\a} \leq 1$, 
Without loss of generality we may assume $\ul {\vp}(0) \geq 1$ and $\bar{\vp}(0) \leq 1$. 
Thus by Lemma \ref{Lemma F}, 
 $$F(\ul {\vp}(0)^{1-n\a},t) \leq 1+C_2/(1+t)$$ 
%with $C_2=\left(1-\ul {\vp}^{1-n\a}(0)\right)\frac{\ul {\vp}^{n\a }(0)}{n\a -1}$.
%Similarly, we may assume $\ol {\vp}(0) \leq 1$ in the construction of $\ol {\vp}(0)$. It then follows 
 $$F(\ol {\vp}(0)^{1-n\a},t) \geq 1-C_3/(1+t),$$ 
 where $C_2, C_3$ depend on $n, \a$ and $u_0(x)$. 
Combining now \eqref{4.1} and \eqref{4.5}, we arrive at that for all $t\geq 0$ and $x\in \ol{\Omega}$, 
\begin{equation}\label{4.10}
\left[\frac{C_2}{1+t} +G^{\frac{1}{1-n \a }}-1\right] \psi 
\leq  (1+t)^{\frac{1}{n \a -1}}u(x,t)-\psi 
\leq \frac{-C_3 \psi}{1+t},
\end{equation}
If $\b=0$, then $G=1$ and \eqref{4.10} implies \eqref{2.1} 
with $C_1=\max\{C_2,C_3\} \sup_{\Omega} |\psi|$. 
If $\b >0$, one needs to estimate $|\nabla u(x,t)|$ more carefully as V. Oliker did\cite[Pages 255-256]{O1}. 
%Recall that $\underline{u}(x,t) \leq u(x,t)$ implies 
%\begin{equation} \label{5.5}
%(G (\ul{\vp}(0)^{1-n\a}+t))^{\frac{-1}{n\a-1}}\psi(x)\leq u(x,t). 
%\end{equation}
%Then
%\begin{equation}\label{5.6}
%\sup_{\Omega} |\nabla u(x,t)| \leq  \sup_{\p \Omega} u_{\nu}(x,t) 
%\leq (G (\ul{\vp}(0)^{1-n\a}+t))^{\frac{-1}{n\a-1}} \sup_{\p \Omega}\psi(x).
%\end{equation}
%It then follows that for all $(x,t) \in \ol{\Omega} \times [0,\infty)$,  
%\begin{equation}\label{5.7}
%\left(1+g^2(\vp (0)+t)^\frac{-1}{n \a -1}\right)^{-\a \b } \leq (1+|\nabla u(x,t)|^2)^{-\a \b} \leq 1,
%\end{equation}
%where 
%$g=G^{\frac{-1}{n\a -1}} \sup_{\p \Omega} \psi_{\nu}(x)$. 
Take an increasing sequence $t_m \to \infty$ and let $G_m = \inf_{\Omega} (1+|\nabla u(x,t_m)|^2)^{-\a \b}$. 
The same argument as in deriving \eqref{4.10} yields for all $t \geq t_m$ and $x\in \ol{\Omega}$, 
\begin{equation}\label{5.8}
\left[\frac{c_m}{1+t} +G_m^{\frac{-1}{n \a -1}}-1\right] \psi 
\leq (1+t)^{\frac{1}{n \a -1}}u(x,t)-\psi 
\leq \frac{-C_3 \psi}{1+t}.
\end{equation} 
where $c_m=\left(1-\underline{\vp}(t_m)^{1-n\a}\right) \underline{\vp}(t_m)^{n \a }/(n\a -1)< \infty$ 
uniformly in $m$ due to \eqref{4.1} . 
%Note that $G_m \to 1 $ as $t_m \to \infty$ due to \eqref{5.7} 
%and $\underline{\vp}(t_m) <\infty $ uniformly in $m$ due to \eqref{5.5}. 
%We may also assume    $\underline{\vp}(t_m) \geq 1$. 
%Thus $0\leq c_m < \infty$. 
The same argument as in \cite{O1} allows one to 
let $t_m \to \infty$ and deduce \eqref{2.3}, hence completing the proof of Theorem \ref{Thm B}. 
\end{proof}
%The rest of proof is the same as in \cite[Pages 255 and 256]{O1}. We may repeat it here for completeness.
\begin{remark}
Similarly to \cite{AP81} one sees the sharpness of the estimate \eqref{5.8} by considering 
the function $u(x,t)=(s+t)^{\frac{1}{n \a -1}}\psi(x) $ for any $s>0$. 
\end{remark}
\begin{remark}
Corollary \ref{Cor 1} with $C_4 = \ul{\vp}(0)^{1-n\a}$ follows from $\underline{u}(x,t) \leq u(x,t)$, namely, 
$$G^{\frac{1}{1-n\a}} (\ul{\vp}(0)^{1-n\a}+t)^{\frac{1}{1-n\a}} \psi(x) \leq u(x,t). $$
\end{remark}

%\section{Proof of Theorem \ref{Thm C}}
%The proof is the same as in \cite{O1} and we repeat it here for the readers' convenience. 
%Let $A$ be an $n\times n$ matrix with $\det A=\pm 1 $ and $A(\ol{\Omega} )=\ol{\Omega}$.  
%Let $\psi $ be a solution of \eqref{EMA}. Then 
%\begin{equation}
%M(\psi(Ax)) = \det( (\psi(Ax))_{ij} )= \det(A^T \psi_{ij}(Ax) A) =\left(\det \psi_{ij}\right)(Ax) =\left(\frac{\psi(Ax)}{1-n\a}\right)^{\frac{1}{\a}}
%\end{equation}
%and $\psi(Ax) =0$ on $\p \Omega$. Note that $\psi(Ax)$ is also strictly convex. 
%Thus by the uniqueness of solutions of \eqref{EMA}, we have 
%$\psi(Ax) =\psi(x)$  for all $x\in \ol{\Omega}$. 
%Then Theorem \ref{Thm B} implies 
%\begin{equation}\label{}
%\lim_{t\to \infty} (1+t)^{\frac{1}{n \a -1}} u(x,t) =\psi(Ax) \text{  uniformly on  } \ol{\Omega}.   
%\end{equation}
%When $\Omega$ is a ball, we have $\psi(Ax)=\psi(x)$ for all orthogonal transformations $A$. 
%In particular $\psi(x) =\psi (|x|)$ and Theorem \ref{Thm C} is proved. 
%When $\Omega$ is centrally symmetric relative to the origin, we have $\psi(Ax)=\psi(x)$ for the reflection $Ax=-x$. 
%Consequently $\psi(x)=\psi(-x)$ and 
%$u(x,t)$ becomes asymptotically centrally symmetric. 

%\newpage

\bibliographystyle{alpha}

\bibliography{myref2015}
 
\end{document}